\newtheorem{theorem}{Theorem}[section]
\newtheorem{proposition}[theorem]{Proposition}
\newtheorem{corollary}[theorem]{Corollary}
\theoremstyle{definition}
\begin{document}

\title[Dimension of automorphisms with fixed degree]
{Dimension of automorphisms with fixed degree for polynomial
algebras}

\author[Vesselin Drensky and Jie-Tai Yu]
{Vesselin Drensky and Jie-Tai Yu}
\address{Institute of Mathematics and Informatics,
Bulgarian Academy of Sciences,
1113 Sofia, Bulgaria}
\email{drensky@math.bas.bg}
%\address{Institute
%of Pure Mathematical Research, Yunnan Normal University, Kunming,
%China; and}
\address{Department of Mathematics, The University of Hong Kong, Hong
Kong SAR, China} \email{yujt@hkucc.hku.hk}

\thanks
{The research of Vesselin Drensky was partially supported by Grant
MI-1503/2005 of the Bulgarian National Science Fund.}

\thanks
{The research of Jie-Tai Yu was partially supported by an RGC-CERG grant.}

\subjclass[2000] {13B25; 16S10; 17A50.} \keywords{Automorphisms,
polynomial algebras, coordinates, free associative algebras,
Nielsen-Schreier varieties, dimension, Jacobian conjecture}

\begin{abstract}
Let $K[x,y]$ be the polynomial algebra in two variables over an
algebraically closed field $K$. We generalize to the case of any
characteristic the result of Furter that over a field of
characteristic zero the set of automorphisms $(f,g)$ of $K[x,y]$
such that $\max\{\text{deg}(f),\text{deg}(g)\}=n\geq 2$ is
constructible with dimension  $n+6$. The same result holds for the
automorphisms of the free associative algebra $K\langle x,y\rangle$.
We have also obtained analogues for free algebras with two
generators in Nielsen -- Schreier varieties of algebras.
\end{abstract}

\maketitle

\section*{Introduction}

Our paper is inspired by the following problem. {\it Let $K$ be an
arbitrary field of any characteristic and let $K[x,y]$ be the
polynomial algebra in two variables. How many $K$-automorphisms
$\varphi=(f,g)$ of $K[x,y]$   satisfying
$\deg(\varphi):=\max\{\deg(f),\deg(g)\}=n$?} Here $\varphi=(f,g)$
means that $f=\varphi(x)$, $g=\varphi(y)$.

In the sequel all automorphisms are $K$-automorphisms. Motivated by
the problem of Arnaud Bodin \cite{B} to determine the number of
automorphisms $\varphi$ with $\deg(\varphi)\leq n$ over the finite
field ${\mathbb F}_q$ with $q$ elements, in our recent paper
\cite{DY} we determined the number $p_n$ of ${\mathbb
F}_q$-automorphisms of degree $n$ and found the Dirichlet series
generating function of the sequence $p_n$, $n\geq 1$.

When the field $K$ is infinite, the natural translation of the
problem is in the language of algebraic geometry. When the field $K$
is algebraically closed and of characteristic 0 this was considered
by Bass, Connell and Wright \cite{BCW} as a possible approach to the
Jacobian conjecture. Identifying the endomorphisms
$\varphi=(f_1,\ldots,f_m)$ of degree $\leq n$ of
$K[X]=K[x_1,\ldots,x_m]$ with points of $K^N$ where
$N=m\binom{m+n}{m}$ is $m$ times the number of monomials of degree
$\leq n$ in $K[X]$, the set ${\mathcal J}_{1,n}$ of all
endomorphisms of degree $\leq n$ with Jacobian $J(f_1,\ldots,f_m)=1$
is a closed subvariety of $K^N$. The subset of automorphisms
${\mathcal A}_{1,n}$ in ${\mathcal J}_{1,n}$ is also a closed
subvariety of $K^N$. In \cite{BCW} it is proved that the Jacobian
conjecture would follow if ${\mathcal J}_{1,n}$ is irreducible and
$\dim({\mathcal A}_{1,n})=\dim({\mathcal J}_{1,n})$. Furter \cite{F}
studied in detail the case of two variables. Corollary 1.6 of
\cite{BCW} gives that the set ${\mathcal A}_n$ of all automorphisms
of degree $\leq n$ is an algebraic variety. Furter showed that for
$K[x,y]$ the variety ${\mathcal A}_n$ is of dimension $n+6$ for any
$n\geq 2$ (and of dimension 6 for $n=1$). He also proved that
unfortunately, the variety ${\mathcal A}_n$ is irreducible for
$n\leq 3$ only, that means the approach for the Jacobian conjecture
suggested by \cite{BCW} does not work.  Applying the result of Moh
\cite{M} that the Jacobian conjecture is true for all endomorphisms
of $K[x,y]$ with invertible Jacobian and of degree $n\leq 100$,
Furter obtained that ${\mathcal J}_n$ is reducible for all
$n=4,\ldots,100$. His proof uses the theorem of Jung -- van der Kulk
\cite{J, K} that the automorphisms of the polynomial algebra
$K[x,y]$ over any field $K$ are tame and the structure of the
automorphism group
\[
\text{Aut}(K[x,y]) = A\ast_CB,\quad C=A\cap B,
\]
where $A\ast_CB$ is the free product of the subgroup
$A$ of affine automorphisms and and the subgroup $B$ of triangular automorphisms
with amalgamated subgroup $C=A\cap B$.

Our first theorem transfers the result of Furter \cite{F} on the
dimension of ${\mathcal A}_n$ to the case of any algebraically
closed field of arbitrary characteristic. It requires a basic
knowledge of algebraic geometry only. As in \cite{F} and \cite{DY}
we use the theorem of Jung -- van der Kulk and the decomposition
$\text{Aut}(K[x,y]) = A\ast_CB$ and obtain that {\it the set
${\mathcal A}^{(n)}$ of all automorphisms $\varphi$ of $K[x,y]$ with
$\deg(\varphi)=n\geq 2$ is a constructible subset of dimension $n+6$
in $K^N$.} As an immediate consequence we obtain that the dimension
of the set ${\mathcal A}_n$ of automorphisms of degree $\leq n$ is
also equal to $n+6$ for all $n\geq 2$. We think that, although using
similar ideas, our proof is simpler than that of Furter \cite{F}.

We are also able to obtain that the dimension of the set of
coordinates in $K[x, y]$ with a fixed degree $n$\  $(n\ge 2)$ is
equal to $n+3$.

By the theorem of Czerniakiewicz and Makar-Limanov \cite{Cz, ML} for
the tameness of the automorphisms of $K\langle x,y\rangle$ over any
field $K$ and the isomorphism $\text{Aut}(K[x,y])\cong
\text{Aut}(K\langle x,y\rangle)$ which preserves the degree of the
automorphisms we derive immediately that the dimension of the
automorphisms of degree $n$ of the free associative algebra
$K\langle x,y\rangle$ is also $n+6$. Here $(f,g)$ is identified with
a point of $K^p=2(1+2+2^2+\cdots+2^n)=2^{n+2}-2$, because $2^m$ is
the dimension of the vector space of homogeneous polynomials of
degree $m$ in $K\langle x,y\rangle$. In this case the commutator
criterion of Dicks \cite{D} gives that $\varphi=(f,g)$ is an
automorphism if and only if $[f,g]=fg-gf=a[x,y]$ for some nonzero
constant $a$.

As in \cite{DY} we have an analogue of our result on the
automorphisms of $K[x,y]$ in the case of free Nielsen -- Schreier
algebras $F(x,y)$ with two generators, i.e., if the subalgebras of
$F(x,y)$ are free in the same class of algebras. For the exact
result we make the additional requirement that the dimension $c_n$
of homogeneous polynomials of degree $n$ in $F(x)$ satisfies
$c_n\leq c_{n+1}$ for any $n\geq 1$. Examples of such algebras are
the free nonassociative algebras and free commutative nonassociative
algebras. {\it The dimension of the set of automorphisms of degree
$n$ of $F(x,y)$ is
\[
d_n=c_1+c_2+\cdots+c_n+4+2\varepsilon,
\]
where $\varepsilon=1$ for unitary algebras and $\varepsilon=0$ for
nonunitary algebras.} In the case of the free nonassociative algebra
$K\{x,y\}$ we have obtained the generating function (see, for
instance, \cite{Wi}) of the sequence $d_n$:
\[
d(t)=\sum_{n\geq 1}d_nt^n=\frac{1+4(2+\varepsilon)t-\sqrt{1-4t}}{2(1-t)}.
\]

\

\section{Dimension of automorphisms of polynomial algebras}

Using that the automorphisms of $K[x,y]$ are tame and the structure of
$\text{Aut}(K[x,y])=A\ast_CB$ of the automorphism group of $K[x,y]$
it is easy to obtain the following well known fact, see e.g.
\cite{W} or \cite{F}. A simple proof is given in \cite{DY}.
Below we write the automorphisms as functions. If
$\varphi=(f_1(x,y),g_1(x,y))$, $\psi=(f_2(x,y),g_2(x,y))$, then
$\varphi\circ\psi(u)=\varphi(\psi(u))$, $u\in K[x,y]$, and hence
\[
\varphi\circ\psi=(f_2(f_1(x,y),g_1(x,y)),g_2(f_1(x,y),g_1(x,y))).
\]

\begin{proposition}\label{canonical form of automorphisms}
Define the sets of automorphisms of $K[x,y]$
\[
A_0=\{\iota=(x,y),\quad \alpha=(y,x+ay)\mid a\in K\},
\]
\[
B_0=\{\beta=(x+h(y),y)\mid h(y)\in y^2K[y]\}.
\]
Every automorphism $\varphi$ of $K[x,y]$ can be presented in a unique
way as a composition
\[
\varphi=(f,g)=\alpha_1\circ\beta_1\circ\alpha_2\circ\beta_2\circ
\cdots\circ\alpha_k\circ\beta_k\circ\lambda,
\]
where $\alpha_i\in A_0$, $\alpha_2,\ldots,\alpha_k\not=\iota$,
$\beta_i\in B_0$, $\beta_1,\ldots,\beta_k\not=\iota$, $\lambda\in A$.
If $\beta_i=(x+h_i(y),y)$ and $\text{\rm deg}(h_i(y))=n_i$,
then the degree of $\varphi$
\[
n=\text{\rm deg}(\varphi)=\max\{\text{\rm deg}(f),\text{\rm deg}(g)\}=n_1\cdots n_k
\]
is equal to the product of the degrees of $\beta_i$.
\end{proposition}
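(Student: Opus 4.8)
The plan is to read the decomposition as the normal form theorem for the amalgamated free product $\mathrm{Aut}(K[x,y])=A\ast_CB$ supplied by Jung--van der Kulk, and then to establish the multiplicativity of the degree by a separate leading-form computation. So I would split the proof into two essentially independent parts: existence/uniqueness of the word, and the formula $n=n_1\cdots n_k$.

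For existence and uniqueness I would first check that $A_0$ and $B_0$ are transversals for $C=A\cap B$. A triangular automorphism has the shape $(ax+p(y),by+c)$ with $a,b\in K^{*}$, and it lies in $C$ exactly when $\deg p\le 1$; a short computation shows that each $C$-coset in $B$ contains precisely one element of the form $(x+h(y),y)$ with $h\in y^2K[y]$, namely the one obtained by $C$-normalizing the leading data and deleting the linear and constant parts of $p$. Thus $B_0$ is a transversal and $h$ parametrizes the cosets. For $A_0$, the quotient of the affine group $A$ by the stabilizer $C$ of the $y$-flag is $\mathbb{P}^1(K)$, and one verifies that $\iota$ together with the family $\alpha_a=(y,x+ay)$, $a\in K$, represent the $|K|+1$ cosets bijectively. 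Feeding these transversals into the classical normal form theorem for $A\ast_CB$ gives that every $\varphi$ is uniquely a reduced alternating product of nontrivial transversal elements times an element of $C$. I would then repackage this into the stated form: the terminal $C$-factor is absorbed together with the last affine coset representative into a single full affine map $\lambda\in A$ (this is exactly why the final factor ranges over all of $A$, not $A_0$), and allowing $\alpha_1=\iota$ covers the case where the reduced word begins with a triangular factor. Uniqueness of the stated form then follows from uniqueness in the normal form theorem together with the unique factorization $\lambda=\alpha_{k+1}c$ through a coset representative; the conditions $\alpha_i\neq\iota$ for $i\ge 2$ and $\beta_i\neq\iota$ are precisely reducedness.

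For the degree formula I would induct on $k$, tracking the leading forms of the current pair as the word $\alpha_1\circ\beta_1\circ\cdots$ is composed left to right under the substitution convention of the excerpt. Under this convention, composing with a flip $\alpha=(y,x+ay)$ interchanges the two coordinates up to a shear, while composing with $\beta_i=(x+h_i(y),y)$ adds $h_i$ of the second coordinate to the first. The invariant to maintain is that immediately after the flip $\alpha_{i+1}$ the pair $(P_i,Q_i)$ satisfies
\[
(\deg P_i,\deg Q_i)=(n_1\cdots n_{i-1},\,n_1\cdots n_i),
\]
so that applying $\beta_{i+1}$ replaces $P_i$ by $P_i+h_{i+1}(Q_i)$ of degree $n_{i+1}(n_1\cdots n_i)$, and the next flip moves this accumulated degree back into the second coordinate for the following step. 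Since $\lambda$ is affine and preserves degree, the final degree is $\max(n_1\cdots n_k,\,n_1\cdots n_{k-1})=n_1\cdots n_k$.

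The main obstacle is the non-cancellation in this induction: one must confirm that the leading form of $h_{i+1}(Q_i)$ genuinely survives. Here the strict inequality $n_{i+1}(n_1\cdots n_i)\ge 2\,n_1\cdots n_i>\deg P_i$ makes cancellation impossible for degree reasons, and the leading form of $h_{i+1}(Q_i)$ is a nonzero constant times the $n_{i+1}$-th power of the leading form of $Q_i$, which is nonzero because $K$ is a field. The delicate point is that these inequalities hold only because of reducedness: the flips $\alpha_i\neq\iota$ are exactly what place the accumulated high degree into the coordinate that the next triangular map reads (without them the degree would stagnate rather than multiply), and $h_i\in y^2K[y]$ forces $n_i\ge 2$. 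Thus the heart of the argument is setting up the leading-form bookkeeping so that the invariant, and with it the inequality, is preserved at every stage.
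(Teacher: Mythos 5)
Your proposal is correct, and it takes exactly the route the paper intends: the paper states this proposition without proof (citing \cite{W}, \cite{F}, and the simple proof in \cite{DY}), describing it as a consequence of Jung--van der Kulk tameness and the amalgamated free product structure $\mathrm{Aut}(K[x,y])=A\ast_CB$, which is precisely your argument. Both your transversal verification (that $A_0$ and $B_0$ represent the $C$-cosets of $A$ and $B$, with the trailing $C$-factor absorbed into $\lambda\in A$) and your leading-form induction for $n=n_1\cdots n_k$ (no cancellation since $\deg h_{i+1}(Q_i)=n_{i+1}\deg Q_i>\deg P_i$, the leading form being a nonzero power over a field) are sound and fill in the details the paper leaves to the references.
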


Now we assume that the affine spaces $K^N$ are equipped with the Zariski topology.
Recall that the set ${\mathcal W}\subseteq K^N$ is constructible if it is a finite union
of intersections ${\mathcal U}_i\cap {\mathcal V}_i$,
where ${\mathcal U}_i$ is open and ${\mathcal V}_i$
is closed in the Zariski topology.
We order the monomials of degree $\leq n$ in $K[x,y]$:
$u_1,\ldots,u_{N/2}$, $N=(n+1)(n+2)$, and identify the endomorphism
$\varphi=(f,g)$ of $K[x,y]$ of degree $\leq n$,
\[
f=\sum_{j=1}^{N/2}a_ju^j,\quad g=\sum_{j=1}^{N/2}b_ju^j,\quad a_j,b_j\in K,
\]
with the point
\[
\pi(\varphi)=(a_1,\ldots,a_{N/2},b_1,\ldots,b_{N/2})\in K^N.
\]
Clearly, the map $\pi$ is a bijection of the set of endomorphisms
of degree $\leq n$ onto $K^N$.

The following theorem is a generalization  of the main results of
Furter \cite{F}, who proved the zero characteristic case.

\begin{theorem}\label{dimension of automorphisms}
Let ${\mathcal A}^{(n)}$ be the set of all automorphisms of degree $n$
of $K[x,y]$. Then the set $\pi({\mathcal A}^{(n)})$ is a constructible
subset of $K^N$ of dimension $d_n=n+6$, $n\geq 2$.
\end{theorem}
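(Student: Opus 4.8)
The plan is to stratify $\mathcal{A}^{(n)}$ according to the canonical form supplied by Proposition~\ref{canonical form of automorphisms}. An automorphism of degree $n\geq 2$ cannot be affine, so in its canonical form $\varphi=\alpha_1\circ\beta_1\circ\cdots\circ\alpha_k\circ\beta_k\circ\lambda$ we have $k\geq 1$, and the proposition gives $\deg(\varphi)=n_1\cdots n_k$ with $n_i=\deg(h_i)\geq 2$. Thus $\mathcal{A}^{(n)}$ is the (by uniqueness, disjoint) union, over all \emph{ordered} factorizations $n=n_1\cdots n_k$ into integers $n_i\geq 2$ and over the two choices $\alpha_1=\iota$ or $\alpha_1=(y,x+a_1y)$, of the strata $\mathcal{S}$ consisting of those $\varphi$ whose canonical form has the prescribed shape. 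As $n$ is fixed, there are only finitely many such strata.

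Next I would parametrize each stratum. On a stratum with $\alpha_1=(y,x+a_1y)$ the free data are the scalars $a_1,\ldots,a_k\in K$ determining $\alpha_1,\ldots,\alpha_k$; the coefficients of each $h_i(y)=c_{i,2}y^2+\cdots+c_{i,n_i}y^{n_i}$ with leading coefficient $c_{i,n_i}\neq 0$; and the six coefficients of the affine automorphism $\lambda$, subject to the open condition that its linear part be invertible. This exhibits the parameter space $X_{\mathcal{S}}$ as a product of copies of $K$, of $K^{n_i-2}\times(K\setminus\{0\})$, and of the $6$-dimensional irreducible group of affine automorphisms; hence $X_{\mathcal{S}}$ is irreducible of dimension
\[
1+(k-1)+\sum_{i=1}^k(n_i-1)+6=\sum_{i=1}^k n_i+6.
\]
Reading off the coefficients of $\pi(\varphi)$ from these parameters amounts to repeated substitution of polynomials into polynomials, which is a polynomial operation; therefore $\Phi_{\mathcal{S}}\colon X_{\mathcal{S}}\to K^N$ is a morphism of varieties and, by Chevalley's theorem, its image $\pi(\mathcal{S})$ is constructible.

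To pass from the dimension of $X_{\mathcal{S}}$ to that of $\pi(\mathcal{S})$ I would invoke the uniqueness clause of Proposition~\ref{canonical form of automorphisms}: distinct parameter tuples yield distinct canonical forms, so $\Phi_{\mathcal{S}}$ is injective. An injective morphism from an irreducible variety has only finite fibers, so by the theorem on the dimension of fibers $\dim\pi(\mathcal{S})=\dim\overline{\pi(\mathcal{S})}=\dim X_{\mathcal{S}}=\sum_i n_i+6$; the stratum with $\alpha_1=\iota$ loses one parameter and has dimension $\sum_i n_i+5$. Since $\pi(\mathcal{A}^{(n)})$ is a finite union of these constructible strata, it is constructible, and its dimension is the maximum of the stratum dimensions. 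It remains to maximize $\sum_i n_i$ subject to $\prod_i n_i=n$ with each $n_i\geq 2$. The elementary inequality $(a-1)(b-1)\geq 1$, i.e. $ab\geq a+b$ for $a,b\geq 2$, shows that merging two factors never decreases the sum at fixed product, so the sum is largest for the trivial factorization $k=1$, $n_1=n$, giving $\max\sum_i n_i=n$. Hence $\dim\pi(\mathcal{A}^{(n)})=n+6$.

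I expect the main obstacle to be the dimension-transfer step rather than the combinatorial optimization: one must check that $\Phi_{\mathcal{S}}$ is genuinely an injective morphism — in particular that the within-factor assignments $a_i\mapsto\alpha_i$, coefficients $\mapsto\beta_i$, and affine coordinates $\mapsto\lambda$ are each injective, and that the leading-coefficient conditions $c_{i,n_i}\neq 0$ are correctly recorded as open conditions making $X_{\mathcal{S}}$ a locally closed irreducible variety — and then apply the fiber-dimension theorem to the constructible image. The remainder is routine bookkeeping with the canonical form.
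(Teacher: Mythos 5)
Your proof is correct, and its overall architecture coincides with the paper's: both stratify $\mathcal{A}^{(n)}$ by ordered factorizations $n=n_1\cdots n_k$ using Proposition~\ref{canonical form of automorphisms}, both realize each stratum as the image of an irreducible parameter space of dimension $\sum_i n_i+6$ (or $\sum_i n_i+5$ when $\alpha_1=\iota$) under a polynomial morphism into $K^N$, and both obtain the upper bound from $n_1+\cdots+n_k\leq n_1\cdots n_k=n$. The genuine difference lies in the lower bound $\dim(\pi(\mathcal{B}(n)))\geq n+6$. The paper proves it by explicit computation on the stratum $\varphi=\alpha\circ\beta\circ\lambda$: it singles out $n+6$ particular coefficients $p_0,\ldots,p_n,q_1,q_2,r_1,r_2,r_3$ of $(f,g)$ and inverts the parametrization rationally on an open set (for instance $a=q_1/np_n$ and $h_i=np_ip_n/(np_nq_2-p_1q_1)$), concluding that these coefficient functions are algebraically independent in $\mathcal{O}(\mathcal{W})$. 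You instead observe that the uniqueness clause of Proposition~\ref{canonical form of automorphisms} makes each parametrizing morphism injective, and then apply the fiber-dimension theorem (injectivity gives finite fibers, hence $\dim X_{\mathcal{S}}=\dim\overline{\Phi_{\mathcal{S}}(X_{\mathcal{S}})}$). Your route buys two things: it gives the exact dimension of \emph{every} stratum, not only the top one, and it is genuinely characteristic-free. Indeed, the paper's explicit formulas divide by $n$: the coefficient of $x^{n-1}y$ in $f$ is $q_1=n\,a\,a_1h_n$, which vanishes identically when the characteristic of $K$ divides $n$, so in that case the paper's chosen coordinates are not algebraically independent and its inversion breaks down; the argument must then be patched, e.g.\ by choosing other coefficients or by falling back on exactly your injectivity argument. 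What the paper's computation buys in exchange is explicitness and elementarity: it exhibits concrete coordinate functions and needs nothing beyond ``dimension equals transcendence degree,'' whereas you invoke Chevalley's theorem and the theorem on dimensions of fibers --- both of which, to be fair, hold over any algebraically closed field, so nothing in your argument is at risk.
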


\begin{proof}
Let $n\geq 2$. We identify the affine endomorphism
\[
\lambda=(a_1x+b_1y+c_1,a_2x+b_2y+c_2)
\]
with the point
$\pi(\lambda)=(a_1,b_1,c_1,a_2,b_2,c_2)\in K^6$, the automorphism
\[
\alpha=(y,x+ay)\in A_0
\]
with the point $\pi_0(\alpha)=a\in K$,
and the triangular automorphism
\[
\beta=(x+h(y),y),\quad h(y)=h_my^m+h_{m-1}y^{m-1}+\cdots+h_2y^2,
\]
with the point $\pi_m(\beta)=(h_m,h_{m-1},\ldots,h_2)\in K^{m-1}$.

We fix an ordered factorization
$n=n_1\cdots n_k$, $n_i\geq 2$.
We denote by ${\mathcal B}(n_1,\ldots,n_k)$ the set of all automorphisms
\[
\varphi=(f,g)=\alpha_1\circ\beta_1\circ\alpha_2\circ\beta_2\circ
\cdots\circ\alpha_k\circ\beta_k\circ\lambda
\]
with $\deg(h_i)=n_i$, $\alpha_1\not=\iota$ and by ${\mathcal
C}(n_1,\ldots,n_k)$ the corresponding set with $\alpha_1=\iota$. We
define a bijection $\vartheta$ between the endomorphisms
\[
\varphi=\alpha_1\circ\beta_1\circ\alpha_2\circ\beta_2\circ
\cdots\circ\alpha_k\circ\beta_k\circ\lambda,
\]
$\alpha_1\not=\iota$, $\lambda$ being an affine endomorphism,
and the points
\[
\vartheta(\varphi)=(\pi_0(\alpha_1),\ldots,\pi_0(\alpha_k),
\pi_{n_1}(\beta_1),\ldots,\pi_{n_k}(\beta_k),\pi(\lambda))\in K^M,
\]
\[
M=k+(n_1-1)+\cdots+(n_k-1)+6=n_1+\cdots+n_k+6.
\]
If the endomorphism $\varphi$ satisfies $\alpha_1=\iota$,
then we omit the first coordinate $\pi_0(\alpha_1)$ and define
a bijection $\vartheta'$ between
$\varphi$ and the points of $K^{M-1}$.
We define two mappings $\xi:K^M\to K^N$
and $\eta:K^{M-1}\to K^N$ by
\[
\xi(u)=\pi(\vartheta^{-1}(u)),
\quad u\in K^M,
\]
\[
\eta(v)=\pi(\vartheta_1^{-1}(v)),
\quad v\in K^{M-1}.
\]
(Although $\vartheta$, $\vartheta'$, $\xi$ and $\eta$ depend on the
factorization $n=n_1\cdots n_k$, we shall use the same notation for
all factorizations.)

Clearly, the coordinates of the points $\xi(u)$ and $\eta(v)$ in
$K^N$ are polynomial functions of the coordinates of $u\in K^M$ and
$v\in K^{M-1}$, respectively. Hence $\xi$ and $\eta$ are morphisms.
Since the sets $\vartheta({\mathcal B}(n_1,\ldots,n_k))$ and
$\vartheta'({\mathcal C}(n_1,\ldots,n_k))$ are open subsets of $K^M$
and $K^{M-1}$ defined by inequalities which guarantee that
$\deg(h_i)=n_i$ and $\lambda$ is an affine automorphism, they are of
dimension $M$ and $M-1$, respectively. Their images  under $\xi$ and
$\eta$, respectively, are constructible subsets of $K^N$. Since the
dimension of a subset of $K^M$ and $K^{M-1}$ under a morphism cannot
be bigger that the dimension of the set itself, we obtain that
\[
\dim(\pi({\mathcal B}(n_1,\ldots,n_k)))
\leq \dim(\vartheta({\mathcal B}(n_1,\ldots,n_k)))=M,
\]
\[
\dim(\pi({\mathcal C}(n_1,\ldots,n_k)))
\leq \dim(\vartheta'({\mathcal C}(n_1,\ldots,n_k)))=M-1.
\]
The image $\pi({\mathcal A}^{(n)})$ of the set of automorphisms
of degree $n$ is the union
of $\xi({\mathcal B}(n_1,\ldots,n_k))$
and $\eta({\mathcal C}(n_1,\ldots,n_k))$
on all ordered factorizations
$n=n_1\cdots n_k$, $n_i\geq 2$.
Hence
\[
\dim(\pi({\mathcal A}^{(n)}))=\max\{\dim(\xi({\mathcal B}(n_1,\ldots,n_k))),
\dim(\eta({\mathcal C}(n_1,\ldots,n_k)))\}
\]
\[
\leq\max\{M=n_1+\cdots+n_k+6\mid n_1\cdots n_k=n\}.
\]
Using the inequality $n_1+n_2\leq n_1n_2$ for $n_1,n_2\geq 2$, we obtain that
$n_1+\cdots+n_k\leq n_1\cdots n_k=n$ and
$\dim(\pi({\mathcal A}_n))\leq n+6$. We shall complete the proof if we show
that $\dim(\pi({\mathcal B}(n)))=n+6$.

The elements of ${\mathcal B}(n)$ have the decomposition
$\varphi=(f,g)=\alpha\circ\beta\circ\lambda$, where
\[
\alpha=(y,x+ay),\quad a\in K,
\]
\[
\beta=(x+h_ny^n+\cdots+h_2y^2,y),\quad h_i\in K,\quad h_n\not=0,
\]
\[
\lambda=(a_1x+b_1y+c_1,a_2x+b_2y+c_2),\quad a_i,b_i,c_i\in K,\quad a_1b_2\not=a_2b_1.
\]
Hence
\[
f=a_1(h_n(x+ay)^n+\cdots+h_2(x+ay)^2)+a_1y+b_1(x+ay)+c_1,
\]
\[
g=a_2(h_n(x+ay)^n+\cdots+h_2(x+ay)^2)+a_2y+b_2(x+ay)+c_2.
\]

Recall that the coordinates of $K^N$ are the coefficients of the endomorphisms
$\varphi=(f,g)$ of degree $\leq n$ and the coordinates of $K^M=K^{n+6}$ are determined by
the automorphisms $\alpha$ and $\beta$ and the affine endomorphism $\lambda$.
Let
\[
\vartheta(\varphi)=(a,h_n,\ldots,h_2,a_1,b_1,c_1,a_2,b_2,c_2)\in K^{n+6}.
\]
Then $\varphi\in{\mathcal B}(n)$ if and only if
$h_n\not=0$, $a_1b_2\not=a_2b_1$.
Considering the coefficients $x^iy^j$ of the first polynomial $f$,
let $p_i$ be the coefficient of $x^i$, $i=0,1,2,\ldots,n$,
$q_1$ be the coefficient of $x^{n-1}y$ and $q_2$ be the coefficient of $y$.
For the second polynomial $g$,
we denote by $r_1,r_2,r_3$ the coefficients of $x,y$ and the constant term.
Hence, for the pair $\varphi=(f,g)$ the corresponding coordinates are
\[
p_i=a_1h_i,\quad i=2,\ldots,n,\quad p_1=b_1,\quad p_0=c_1,
\]
\[
q_1=na_1h_na,\quad q_2=a_1+b_1a,
\]
\[
r_1=b_2,\quad r_2=a_2+b_2a,\quad r_3=c_2.
\]
Consider the open subset $\mathcal U$
of $K^{n+6}$ defined by the inequalities
\[
h_n\not=0,\quad a_1b_2\not=a_2b_1,\quad a_1\not=0,\quad b_2\not=0.
\]
Clearly, $\vartheta^{-1}({\mathcal U})\subset{\mathcal B}(n)$.
Let ${\mathcal W}=\pi({\mathcal B}(n))$ be the image of
${\mathcal B}(n)$ in $K^N$ and let ${\mathcal O}({\mathcal W})$
be the algebra of regular functions of $\mathcal W$. The functions
$p_i,q_i,r_i$ are defined on the image $\xi({\mathcal U})$ of
$\mathcal U$ and satisfy the following conditions there:
\[
a=\frac{q_1}{np_n},\quad b_1=p_1,\quad c_1=p_0,\quad
a_1=q_2-b_1a=q_2-\frac{p_1q_1}{np_n},
\]
\[
h_i=\frac{p_i}{a_1}=\frac{np_ip_n}{np_nq_2-p_1q_1},\quad i=2,\ldots,n,
\]
\[
b_2=r_1,\quad c_2=r_3,\quad a_2=r_2-b_2a=r_2-\frac{q_1r_1}{np_n}.
\]
Since the coordinate functions
$a,h_n,\ldots,h_2,a_1,b_1,c_1,a_2,b_2,c_2$
of $\vartheta(\varphi)$ are algebraically independent
on $\mathcal U$ and
$\xi({\mathcal U})$ contains an open subset, we obtain that
the $n+6$ functions $p_0,p_1,\ldots,p_n,q_1,q_2,r_1,r_2,r_3$ are also algebraically
independent in ${\mathcal O}({\mathcal W})$. Hence
$\dim(\pi({\mathcal B}(n)))\geq n+6$ which completes the proof.
\end{proof}

By the above result, we are also able to determine the dimension of
the set of coordinates with fixed degree.

\begin{theorem}\label{dimension of coordinates}
Let ${C_n}$ be the set of all coordinates of degree $n$ of $K[x,y]$.
Then $\dim(C_n)=n+3$ for $n\geq 2$ and $\dim(C_1)=3$.
\end{theorem}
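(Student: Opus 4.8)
The plan is to realize $C_n$ as the image of a single piece of $\mathcal A^{(n)}$ under the first-coordinate projection $\rho\colon (f,g)\mapsto f$, and then to read off $\dim C_n$ from Theorem~\ref{dimension of automorphisms} by a fibre-dimension count. Concretely, I would set $X=\mathcal A^{(n)}\cap\{\deg f=n\}$, a constructible subset of $K^N$, and study $\rho\colon X\to C_n$. The whole argument rests on understanding the fibres of $\rho$ and on knowing that $\rho$ is onto $C_n$.

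First I would pin down the fibres. The elementary input is that the automorphisms fixing $x$ are exactly the $\sigma=(x,cy+p(x))$ with $c\in K^{*}$, $p\in K[t]$: if $\sigma(x)=x$ then $K[x,\sigma(y)]=K[x,y]$ forces $\sigma(y)$ to have degree $1$ in $y$ with invertible leading coefficient, and this holds in every characteristic. Hence, given one completion $(f,g_0)$, for any competitor $(f,g)$ the automorphism $(f,g_0)^{-1}\circ(f,g)$ fixes $x$, so by the composition rule the set of all completions of $f$ is precisely $\{cg_0+p(f)\mid c\in K^{*},\ p\in K[t]\}$. Since $(f,g_0)$ is an automorphism, $1,f,g_0$ are linearly independent, so this family is faithfully parametrized by $(c,p)$. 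Imposing $\deg(f,g)=n$, i.e. $\deg g\le n$: as $\deg p(f)=n\deg p$, the admissible $p$ are exactly those with $\deg p\le 1$, so the fibre over each $f\in C_n$ is $\{cg_0+\alpha f+\beta\mid c\in K^{*},\ \alpha,\beta\in K\}\cong K^{*}\times K^{2}$, of dimension $3$, independently of $f$.

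Next I would establish that $\rho(X)=C_n$, i.e. that every coordinate of degree $n$ admits a completion of degree $n$. For this I would take a completion $g_0$ of minimal degree among $\{cg_0+p(f)\}$ and argue by contradiction: if $\deg g_0>n=\deg f$, then by the Jung--van der Kulk theory the leading form of $g_0$ is proportional to a power of the leading form of $f$ (here necessarily $\deg f\mid\deg g_0$), so subtracting a suitable $cf^{\deg g_0/n}$ strictly lowers $\deg g_0$, contradicting minimality; hence $\deg g_0\le n$ and $(f,g_0)\in X$. I expect this divisibility / leading-form step to be the main obstacle, since it is the only place where more than formal bookkeeping is used.

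Finally I would apply the theorem on the dimension of fibres to the surjective morphism $\rho\colon X\to C_n$: all fibres have dimension $3$, so $\dim X=\dim C_n+3$, and Chevalley's theorem simultaneously shows $C_n$ is constructible. It remains to note $\dim X=n+6$: the open piece $\xi(\mathcal U)$ from the proof of Theorem~\ref{dimension of automorphisms} lies in $X$ (there $a_1\neq 0$ forces $\deg f=n$) and already has dimension $n+6$, while $X\subseteq\mathcal A^{(n)}$ has dimension $n+6$; hence $\dim C_n=(n+6)-3=n+3$. The case $n=1$ is direct: $C_1$ is the set of affine $ax+by+c$ with $(a,b)\neq(0,0)$, an open subset of the $3$-dimensional space of polynomials of degree $\le 1$, so $\dim C_1=3$.
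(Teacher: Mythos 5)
Your proof is correct, and at bottom it is the same kind of argument as the paper's: realize $C_n$ as the image of a constructible set of automorphisms under $(f,g)\mapsto f$, describe the fibres of this projection via Jung--van der Kulk theory, and transfer the dimension from Theorem~\ref{dimension of automorphisms}. The differences in execution are real, though, and they work in your favour. The paper fibres the smaller set $B_n=\{(f,g):\deg f=n>\deg g\}$, whose fibres $\{cg+d\mid c\in K^*,\ d\in K\}$ are $2$-dimensional, and must then prove $\dim(B_n)=n+5$; for this it returns to the decomposition of Proposition~\ref{canonical form of automorphisms} and asserts that $\deg(f)>\deg(g)$ forces $\alpha_1=\iota$. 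That assertion is false as stated: the automorphism $(y+x^2,x)=\alpha_1\circ\beta_1$ with $\alpha_1=(y,x)\ne\iota$ and $\beta_1=(x+y^2,y)$ has $\deg(f)=2>1=\deg(g)$, and by the uniqueness in Proposition~\ref{canonical form of automorphisms} this is its decomposition. (The conclusion $\dim(B_n)=n+5$ is nevertheless correct --- in each chart the condition $\deg(g)<n$ is a codimension-one condition, e.g.\ $a_2=0$ in ${\mathcal B}(n)$ --- and indeed it also follows a posteriori from your own count.) Your choice of total space $X=\{(f,g)\in{\mathcal A}^{(n)}:\deg f=n\}$, with $3$-dimensional fibres $\{cg_0+\alpha f+\beta\mid c\in K^*,\ \alpha,\beta\in K\}$, sidesteps this entirely: $\dim(X)=n+6$ follows by squeezing $X$ between $\xi({\mathcal U})$ and ${\mathcal A}^{(n)}$, both already handled in the proof of Theorem~\ref{dimension of automorphisms}, so no return to the amalgamated-product decomposition is needed. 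You also make explicit two steps the paper uses silently: the classification of automorphisms fixing $x$ (hence of all completions $cg_0+p(f)$ of a coordinate, valid in any characteristic via the $K[x]$-algebra viewpoint rather than the Jacobian), and the surjectivity statement that every coordinate of degree $n$ admits a completion of degree $\le n$. One small ordering point: your fibre computation (admissible $p$ are exactly those with $\deg p\le 1$) presupposes $\deg g_0\le n$, which you establish only afterwards by the minimal-degree argument; read with $g_0$ chosen of minimal degree from the start, every step is in place.
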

\begin{proof}According to the well-known theorem of Jung-van der Kulk
\cite{J, K},  for a fixed coordinate $f\in K[x,y]$ with $\deg(f)>1$,
two automorphisms $(f,g)$ and $(f,g_1)$ with $\deg(g)<\deg(f)$ and
$\deg(g_1)<\deg(f)$ if and only if $g_1=cg+d$ where $c\in K-\{0\}$,\
$d\in K$, hence $\dim(C_n)=\dim(B_n)-2$, where $B_n$ is the set of
automorphisms $(f,g)$ of degree $n$ with $\deg(f)=n>\deg(g)$. We can
now determine $\dim(B_n)$ as follows: tracing back to the proof of
Theorem \ref{dimension of automorphisms}, in the decomposition of
$(f,g)$,\ $\alpha_1$ is always the identity automorphism as
$\deg(f)>\deg(g)$. Hence $\dim(B_n)=\dim(\pi({\mathcal
A}^{(n)}))-1=n+5$. Therefore $\dim(C_n)=\dim(B_n)-2=n+3$.

When $n=1$, obviously the set of coordinates
$$C_1:=\{ax+by+c\
\mid a, b,  c\in K,\ (a, b)\ne (0, 0)\}$$ has dimension $3$.
\end{proof}

\begin{corollary}\label{dimension for all automorphisms}
The dimension of the set ${\mathcal A}_n$ of
automorphisms of $K[x,y]$ of degree $\leq n$ is equal to $n+6$ for all $n\geq 2$.
\end{corollary}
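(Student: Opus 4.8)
The plan is to realize $\mathcal{A}_n$ as a finite union of the sets of automorphisms of each fixed degree and then reduce the whole statement to Theorem \ref{dimension of automorphisms} via the behaviour of dimension under finite unions. First I would fix a single ambient space: for degree $\leq n$ we have $N=(n+1)(n+2)$, and $\pi$ embeds every endomorphism of degree $\leq n$ as a point of $K^N$. Since an automorphism of degree $m\leq n$ is in particular an endomorphism of degree $\leq n$, all the images $\pi(\mathcal{A}^{(m)})$ for $1\leq m\leq n$ live inside this one $K^N$, with the coordinates attached to monomials of degree exceeding $m$ simply vanishing. This inclusion of the degree-$\leq m$ picture into the degree-$\leq n$ picture is a linear injection, hence preserves dimension, so the value $\dim\pi(\mathcal{A}^{(m)})$ computed in $K^N$ coincides with the one supplied by Theorem \ref{dimension of automorphisms}.

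Because every automorphism has a well-defined degree, we have the decomposition
\[
\pi(\mathcal{A}_n)=\bigcup_{m=1}^{n}\pi(\mathcal{A}^{(m)})
\]
as a finite union of constructible subsets of $K^N$. Next I would invoke the standard fact that the dimension of a finite union of constructible sets equals the maximum of the dimensions of its members (the closure of a finite union is the union of the closures, and the dimension of a union of varieties is the largest dimension among them). This gives
\[
\dim\pi(\mathcal{A}_n)=\max_{1\leq m\leq n}\dim\pi(\mathcal{A}^{(m)}).
\]
By Theorem \ref{dimension of automorphisms} the summand $\pi(\mathcal{A}^{(m)})$ has dimension $m+6$ for $2\leq m\leq n$, while the affine automorphisms of degree $1$ contribute a set of dimension $6$. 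Thus the maximum equals $\max\{6,8,9,\ldots,n+6\}$, which for $n\geq 2$ is $n+6$, attained at $m=n$.

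For the lower bound alone it already suffices to note the inclusion $\mathcal{A}^{(n)}\subseteq\mathcal{A}_n$, giving $\dim\pi(\mathcal{A}_n)\geq n+6$, while the upper bound is just the remark that every summand has dimension at most $n+6$; together these force equality. I expect essentially no genuine obstacle here: the only points requiring care are that the pieces must be compared inside a single ambient $K^N$ and that dimension passes to finite unions as a maximum rather than a sum—both entirely routine. The whole content of the statement is carried by Theorem \ref{dimension of automorphisms}, which is exactly why this is phrased as a corollary.
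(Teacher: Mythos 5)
Your proof is correct and follows exactly the paper's own argument: decompose $\mathcal{A}_n$ as the finite union $\bigcup_{m=1}^{n}\mathcal{A}^{(m)}$, use that the dimension of a finite union is the maximum of the dimensions of its pieces, and invoke Theorem \ref{dimension of automorphisms} to conclude the maximum is $n+6$. Your additional care about embedding all the sets $\pi(\mathcal{A}^{(m)})$ into a single ambient $K^N$ via a dimension-preserving linear injection is a point the paper leaves implicit, but it does not change the substance of the argument.
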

\begin{proof}
The corollary follows immediately from
Theorem \ref{dimension of automorphisms}
because
\[
{\mathcal A}_n=\bigcup_{i=1}^n{\mathcal A}^{(i)},
\quad
\dim({\mathcal A}_n)=\max\{\dim({\mathcal A}^{(i)})\mid i=1,\ldots,n\}=n+6.
\]
\end{proof}

\

\section{The case of free Nielsen -- Schreier algebras}

For a background on varieties of algebras with the Nielsen -- Schreier property
see e.g. \cite{MSY}. We shall use that if such a variety is defined by
a homogeneous (with respect to each variable) system of polynomial identities,
then the automorphisms of the finitely generated free algebras are tame.
We have the following analogue of Theorem
\ref{dimension of automorphisms}:

\begin{theorem}\label{automorphisms for free algebras}
Let $F(x,y)$ be the free $K$-algebra with two generators in a
Nielsen -- Schreier variety defined by a homogeneous system of
polynomial identities. Let $c_n$ be the dimension of all homogeneous
polynomials $u(x)$ in one variable of degree $n$ in $F(x,y)$ and
assume that $1=c_1=c_2\leq c_3\leq\cdots$.

{\rm (i)} The dimension of the set of automorphisms of degree $n$ of $F(x,y)$ is
\[
d_n=c_1+c_2+\cdots+c_n+4+2\varepsilon,
\]
where $\varepsilon=1$ for unitary algebras and $\varepsilon=0$ for nonunitary algebras.

{\rm (ii)} For the free nonassociative algebra $K\{x,y\}$ the
generating function of the sequence $d_n$ is
\[
d(t)=\sum_{n\geq 1}d_nt^n=\frac{1+4(2+\varepsilon)t-\sqrt{1-4t}}{2(1-t)}.
\]
\end{theorem}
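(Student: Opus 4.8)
The plan is to mimic the structure of the proof of Theorem \ref{dimension of automorphisms}, replacing the affine group and the polynomial triangular automorphisms by their Nielsen--Schreier analogues. First I would establish the analogue of Proposition \ref{canonical form of automorphisms} for $F(x,y)$: since the variety is Nielsen--Schreier and defined by homogeneous identities, the automorphisms are tame, so $\text{Aut}(F(x,y))$ decomposes as a free product with amalgamation of an affine-type subgroup $A$ and a triangular subgroup $B$. The affine part consists of maps $(ax+by+\delta c,\, a'x+b'y+\delta c')$ with invertible linear part, where the constant terms $c,c'$ occur only in the unitary case (this is the source of the $\varepsilon$, contributing $2\varepsilon$ coordinates). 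The triangular generators have the form $\beta=(x+h(y),y)$ with $h(y)$ a homogeneous-graded element of $F(y)$ of degree between $2$ and $n$; the key point is that the space of such $h$ of degree exactly $m$ has dimension $c_m$, and the space of all admissible $h$ of degree $\le n$ has dimension $c_2+\cdots+c_n$.

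Next, for each ordered factorization $n=n_1\cdots n_k$ with $n_i\ge 2$ I would define the parameter space $K^M$ exactly as in the polynomial case, where now $M = k + (c_{n_1}+\cdots+c_{n_k}) + (\text{affine count})$, with the affine count equal to $4+2\varepsilon$ (four entries for the invertible $2\times 2$ linear part plus $2\varepsilon$ for constants). The degree of the composite is again multiplicative, $n=n_1\cdots n_k$, by the same argument as in Proposition \ref{canonical form of automorphisms}. The maps $\xi,\eta$ into the coordinate space $K^N$ are morphisms, so the dimension of each piece is bounded above by $M$. To get the upper bound $d_n\le c_1+\cdots+c_n+4+2\varepsilon$ I would show that over all factorizations the maximum of $c_{n_1}+\cdots+c_{n_k}$ is attained at the trivial factorization $k=1$, $n_1=n$; this is exactly where the hypothesis $c_1=c_2\le c_3\le\cdots$ is used, via superadditivity of the form $c_{n_1}+c_{n_2}\le c_{n_1 n_2}$ for $n_1,n_2\ge 2$, which must be deduced from monotonicity together with $c_1=c_2=1$ and $n_1+n_2\le n_1 n_2$.

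For the lower bound I would treat the dominant piece coming from the single factor $\beta=(x+h(y),y)$ with $h$ of degree $n$, composed with an affine automorphism (and, in the unitary case, with the constant shifts). Writing out $f$ and $g$ in terms of the parameters $a_1,b_1,a_2,b_2$, the constants, and the coefficients of $h$, I would exhibit $d_n$ coordinate functions $p_i,q_i,r_i$ on $K^N$ that can be inverted to recover the parameters on a suitable open subset, precisely paralleling the explicit formulas in the proof of Theorem \ref{dimension of automorphisms}. Algebraic independence of the original parameters then forces these $d_n$ regular functions to be algebraically independent, giving $\dim\ge d_n$ and hence equality. Part (ii) is then a routine computation: for $K\{x,y\}$ one has $c_n = \frac{1}{n}\binom{2n-2}{n-1}$ (the Catalan numbers, counting binary bracketings), whose generating function is $\frac{1-\sqrt{1-4t}}{2t}$, so summing $d_n t^n = \sum(c_1+\cdots+c_n)t^n + (4+2\varepsilon)\sum_{n\ge 1}t^n$ and using $\sum_{n\ge 1}(c_1+\cdots+c_n)t^n = \frac{1}{1-t}\sum_{n\ge 1}c_n t^n$ yields the stated closed form.

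The main obstacle I anticipate is the upper-bound step, namely proving that the trivial factorization dominates. In the polynomial case this rested on the clean inequality $n_1+n_2\le n_1 n_2$, but here I need the weighted version $c_{n_1}+\cdots+c_{n_k}\le c_n$, which is not immediate from monotonicity alone and genuinely requires the normalization $c_1=c_2=1$; I would prove the two-factor case $c_{n_1}+c_{n_2}\le c_{n_1 n_2}$ first (using $n_1 n_2\ge n_1+n_2$ and monotonicity to compare $c_{n_1 n_2}$ with $c_{n_1+n_2}\ge c_{n_1}+c_{n_2}$, the latter again needing care) and then induct on $k$. A secondary subtlety is bookkeeping the constant-term coordinates correctly so that the $2\varepsilon$ term appears in both the unitary and nonunitary cases, which amounts to checking that in the nonunitary setting the affine subgroup genuinely has no constant part and contributes only $4$ to $M$.
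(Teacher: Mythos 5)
Your overall strategy follows the paper's (tameness, amalgamated product, parameter spaces per factorization, morphism dimension bound, single-factor piece for the lower bound), but there is a genuine counting error at its core. A triangular factor $\beta_i=(x+h_i(y),y)$ of degree $n_i$ in the amalgamated decomposition is not given by a homogeneous $h_i$: it is an arbitrary element whose homogeneous components run through all degrees $2,\ldots,n_i$ with nonzero top component (exactly as $h(y)=h_{n_i}y^{n_i}+\cdots+h_2y^2$ in the polynomial case). So each $\beta_i$ contributes $c_2+\cdots+c_{n_i}$ parameters, not $c_{n_i}$, and the correct count is
\[
M=k+\sum_{i=1}^k(c_2+\cdots+c_{n_i})+4+2\varepsilon
=\sum_{i=1}^k(c_1+c_2+\cdots+c_{n_i})+4+2\varepsilon .
\]
With your count $M=k+\sum_i c_{n_i}+4+2\varepsilon$, the trivial factorization would give the upper bound $c_1+c_n+4+2\varepsilon$, which for $n\geq 3$ is strictly smaller than the claimed $d_n=c_1+\cdots+c_n+4+2\varepsilon$ whenever $c_2+\cdots+c_{n-1}>1$ (e.g. Catalan numbers, $n=4$). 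This contradicts your own lower bound, since the single triangular factor of degree $n$, carrying all its lower-degree components, already sweeps out a set of dimension $d_n$. The upper and lower bounds in your proposal cannot both hold, so the undercount is fatal and must be repaired before the argument closes.

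The miscount also manufactures the ``main obstacle'' you anticipate. The inequality you plan to prove, $c_{n_1}+c_{n_2}\leq c_{n_1+n_2}$ (hence $\leq c_{n_1n_2}$), does not follow from the theorem's hypotheses and is in fact false under them: the constant sequence $c_n\equiv 1$ satisfies $1=c_1=c_2\leq c_3\leq\cdots$, yet $c_2+c_2=2>1=c_4$. With the correct parameter count, what one needs is the partial-sum inequality
\[
(c_1+\cdots+c_{n_1})+(c_1+\cdots+c_{n_2})\leq c_1+\cdots+c_{n_1+n_2}\leq c_1+\cdots+c_{n_1n_2},
\]
which is immediate: termwise monotonicity gives $c_j\leq c_{n_1+j}$, and $n_1+n_2\leq n_1n_2$ handles the second step. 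This is precisely the paper's argument, and no superadditivity of the $c_n$ themselves is required. Finally, a small slip in (ii): with $c_n=\frac{1}{n}\binom{2n-2}{n-1}$ indexed from $n=1$, the generating function $\sum_{n\geq 1}c_nt^n$ equals $\frac{1-\sqrt{1-4t}}{2}$, not $\frac{1-\sqrt{1-4t}}{2t}$; with your version the stated closed form for $d(t)$ would not come out.
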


\begin{proof}
(i) We use the following well known property of free algebras in
Nielsen -- Schreier varieties defined by homogeneous polynomial
identities. If several homogeneous elements in the free algebra
are algebraically dependent, then one of them is a polynomial of the others.
This implies that
$\text{Aut}(F(x,y))=A\ast_CB$, where $A$ is the affine group
if we consider unitary algebras and the general linear group when we
allow nonunitary algebras, $B$ is the group of triangular automorphisms and $C=A\cap B$.
Hence we have an analogue of Proposition \ref{canonical form of automorphisms}.
Following the main steps of the proof of Theorem \ref{dimension of automorphisms}
we obtain that the polynomials $h_n(x)$ of degree $n$ without constant and linear term
depend on $c_2+\cdots+c_n$ coordinates. Hence for a fixed factorization
$n=n_1\cdots n_k$ the dimension of the affine space $K^M$ of the endomorphisms
\[
\varphi=(f,g)=\alpha_1\circ\beta_1\circ\alpha_2\circ\beta_2\circ
\cdots\circ\alpha_k\circ\beta_k\circ\lambda
\]
is equal to
\[
M=k+\sum_{i=1}^k(c_2+\cdots+c_{n_i})+4+2\varepsilon
=\sum_{i=1}^k(c_1+c_2+\cdots+c_{n_i})+4+2\varepsilon
\]
because $c_1=1$.
Hence the dimension $d_n$ of the automorphisms of degree $n$ satisfies
\[
d_n\leq\max\{M=\sum_{i=1}^k(c_1+c_2+\cdots+c_{n_i})+4+2\varepsilon\}.
\]
Using the inequalities $c_1\leq c_2\leq c_3\leq\cdots$ we obtain that
\[
(c_1+c_2+\cdots+c_{n_1})+(c_1+c_2+\cdots+c_{n_2})\leq c_1+c_2+\cdots+c_{n_1+n_2}
\]
and, by induction,
\[
\sum_{i=1}^k(c_1+c_2+\cdots+c_{n_i})
\leq c_1+c_2+\cdots+c_n.
\]
As in Theorem \ref{dimension of automorphisms} we reach the equality
for the set of automorphisms
$\varphi=\alpha\circ\beta\circ\lambda$, where $\beta$ is a
triangular automorphism of degree $n$.

\

(ii) For the free nonassociative algebra, see e.g. \cite{H}, the number $c_n$
is equal to the Catalan number
\[
c_n=\frac{1}{n}\binom{2n-2}{n-1},\quad n\geq 1,
\]
the generating function of the Catalan numbers satisfies the quadratic equation
$c^2(t)-c(t)-t=0$ and has the presentation
\[
c(t)=\frac{1-\sqrt{1-4t}}{2}.
\]
To complete the proof we use that if the generating function of the sequence
$a_1,a_2,\ldots$ is $a(t)$, then the generating fucntion of the sequence
\[
b_n=a_1+a_2+\cdots+a_n,\quad n=1,2,\ldots,
\]
has the form
\[
b(t)=\frac{a(t)}{1-t}.\]\end{proof}

\

\section*{Acknowledgements}

The authors would like to thank Arnaud Bodin for bringing their
attention to the problem and for helpful comments and suggestions.

\end{document}